\newtheorem{rem}{Remark}
\newtheorem{pro}{Proposition}
\newtheorem{lem}{Lemma}
\title{Hartman-Watson distribution and hyperbolic-like Heat kernels} 
\author[N. Demni]{Nizar Demni}
\address{ Aix-Marseille Universit\'e \\ CNRS\\  Centrale Marseille \\ I2M - UMR 7373\\  39 rue F. Joliot Curie\\  13453 Marseille \\ France }
\email{nizar.demni@univ-amu.fr}
\keywords{Hartman-Watson distribution; Heat kernel; Hyperbolic spaces; Maass Laplacian, Hyperbolic Jacobi operator, Harmonic AN groups. \\ 2010 Mathematics Subject Classification: 22E25; 43A85; 43A90; 60J65.  } 
\begin{document}
\maketitle

\begin{abstract} 
We relate Gruet's formula for the heat kernel on real hyperbolic spaces to the commonly used one derived from Millson induction and distinguishing the parity of the dimensions. The bridge between both formulas is settled by Yor's result on the joint distribution of a Brownian motion and of its exponential functional at fixed time. This result allows further to relate Gruet's formula with real parameter to the heat kernel of the hyperbolic Jacobi operator and to derive a new integral representation for the heat kernel of the Maass Laplacian. When applied to harmonic AN groups (known also as Damek-Ricci spaces), Yor's result yields also a new integral representation of their corresponding heat kernels through the modified Bessel function of the second kind and the Hartman-Watson distribution. This newly obtained formula has the merit to unify both existing formulas in the same way Gruet's formula does for real hyperbolic spaces.
\end{abstract} 
\section{Reminder and Motivation} 
%\end{equation*}
Let  
\begin{equation*}
H_n = \mathbb{R}^{n-1} \times \mathbb{R}_+^{\star}, n \geq 2,
\end{equation*}
be the half-space model of the real hyperbolic space in $n$ dimensions, endowed with its Riemannian metric: 
\begin{equation*}
ds^2 = \frac{dx_1^2+ \dots dx_n^2}{x_n^2}. 
\end{equation*}
The hyperbolic Brownian motion is the diffusion whose generator acts on smooth functions as:
\begin{equation*}
\frac{\Delta_n}{2}:= \frac{1}{2}\left[x_n^2\sum_{i=1}^n\partial_i^2 + (2-n)x_n \partial_n\right]
\end{equation*}
with Neumann conditions on the boundary. Its semi-group density with respect to the Riemannian volume of $H_n$ admits several representations. The commonly used one is derived from Millson induction and reads as follows (see e.g. \cite{Gri-Nog}, \cite{Mat}): 
\begin{itemize}
\item If $n = 2m+1, m \geq 0,$ is odd, then 
\begin{equation}\label{Odd}
q_t^{(2m+1)}(x,y) = q_t^{(2m+1)}(r) = \frac{e^{-m^2t/2}}{(2\pi)^m\sqrt{2\pi t}} \left(-\frac{1}{\sinh(r)}\frac{d}{dr}\right)^me^{-r^2/(2t)},
\end{equation}
where $r = d_n(x,y)$ is the hyperbolic distance between $x, y \in H_n$. 
\item If $n = 2m+2, m \geq 0,$ is even, then  
\begin{equation}\label{Even}
q_t^{(2m+2)}(x,y) = q_t^{(2m+2)}(r) = \frac{e^{-(2m+1)^2t/8}\sqrt{2}}{(2\pi t)^{3/2} (2\pi)^m} \left(-\frac{1}{\sinh(r)}\frac{d}{dr}\right)^m \int_r^{\infty} \frac{\theta e^{-\theta^2/(2t)}}{(\cosh(\theta) - \cosh(r))^{1/2}} d\theta.
\end{equation}\end{itemize}
Another less-known formula for the hyperbolic heat semi-group was derived in \cite{Gruet} and does not distinguish odd and even dimensions. It is given by the following oscillatory integral (see also \cite{Mat}):
\begin{equation}\label{GruetFor}
q_t^n(x,y) = q_t^n(r) = \frac{e^{-(n-1)^2t/8}}{\pi(2\pi)^{n/2}\sqrt{t}}\Gamma\left(\frac{n+1}{2}\right)\int_0^{\infty}\frac{e^{(\pi^2-\rho^2)/(2t)}\sinh(\rho)\sin(\pi\rho/t)}{[\cosh(\rho) + \cosh(r)]^{(n+1)/2}} d\rho,
\end{equation}
and is obtained after appealing to the conditional distribution of the exponential functional 
\begin{equation*}
A_t:= \int_0^te^{2B_s}ds, \quad t > 0,
\end{equation*}
given $B_t$, where $(B_s)_{s \geq 0}$ is a linear Brownian motion. Actually, M. Yor proved that the conditional distribution of $A_t$ given $B_t = \theta$ admits the following expression: 
\begin{equation}\label{Yor} 
\mathbb{P}(A_t \in ds| B_t = \theta) := a_t(s,\theta)ds :=  \frac{1}{s}e^{-(1+e^{2\theta})/(2s)} u\left(t, \frac{e^{\theta}}{s}\right)ds,
\end{equation}
where 
\begin{equation}\label{HW}
u(t,y) := \frac{y}{\pi\sqrt{2\pi t}}\int_0^{\infty}e^{(\pi^2-\rho^2)/(2t)}e^{-y\cosh(\rho)}\sinh(\rho)\sin\left(\frac{\pi\rho}{t}\right) d\rho. 
\end{equation}
Up to a normalising factor, $u(t,y)$ is the density of the so-called Hartman-Watson distribution at time $t$ (\cite{Har-Wat}). A recent and deep study of this density may be found in \cite{Jak-Wis} where new interpretations through real Brownian motion were obtained. 

In \cite{Gruet}, Gruet raised the problem of relating his formula \eqref{GruetFor} to \eqref{Odd} and \eqref{Even} and answered this question there when $n \in \{2,3\}$. He also asked whether \eqref{GruetFor} still gives the heat kernel of the radial part of 
$\Delta_n/2$ when $n > 1$ is a positive real parameter, which is an instance of the hyperbolic Jacobi operator recalled below (\cite{Koo}). This question was subsequently answered by Gruet himself in \cite{Gruet1}. The proofs written in \cite{Gruet} and \cite{Gruet1} rely on Cauchy Residue's Theorem and on the Green function of the hyperbolic Jacobi operator. In this paper, we rather answer Gruet questions alluded to above by simply observing that Yor's result \eqref{Yor} entails the following identity (multiply \eqref{Yor} by the Gaussian density then integrate over $s$): 
\begin{equation}\label{Int1}
\frac{1}{\sqrt{2\pi t}} e^{-\theta^2/(2t)} = \int_0^{\infty} e^{-\cosh(\theta) y} \frac{u(t,y)}{y} dy, \quad t > 0, \theta \in \mathbb{R}.
\end{equation}
Doing so provides simpler proofs of \eqref{GruetFor} and of its connection to the hyperbolic Jacobi operator, making only use of techniques from real analysis. The identity \eqref{Int1} applies further to harmonic AN groups (known also as Damek-Ricci spaces) in which case it yields a new integral representation for their heat kernels through the modified Bessel function of the second kind and the density $u(t,y)$. This newly obtained representation does not distinguish the parity of the center of $N$ as Gruet's formula does for real hyperbolic spaces. In particular, we retrieve the integral representations of the heat kernels of complex and quaternionic hyperbolic spaces derived in \cite{Mat} using Malliavin calculus. 

Another interesting application of \eqref{Int1} is concerned with the Maass Laplacian. This operator originates in analytic number theory where it is used in the study of the so-called Maass forms (\cite{Aya-Int}) and in mathematical physics as well where it quantizes the energy of a particle in the hyperbolic half-plane submitted to a uniform magnetic field (\cite{Com}). Its heat kernel admits a quite similar integral representation to \eqref{Even} which comes with an additional factor in the integrand. The latter makes the application of \eqref{Int1} to the Maass heat kernel different from those corresponding to hyperbolic spaces due to integrability issues. In particular, we obtain an integral representation through the Hartman-Watson density and a generalised Bessel polynomial. 

The paper is organised as follows. The next section is devoted to the derivation of Gruet's formula \eqref{GruetFor} from \eqref{Int1} and to its analogue for the heat kernel of the Maass Laplacian. In section 3, we prove that the heat kernel of the hyperbolic Jacobi operator is still given by \eqref{GruetFor} with $n > 1$ being a real number. In the last section, we apply \eqref{Int1} to derive our new integral representation of heat kernels of $AN$ groups and retrieve their special instances corresponding to complex and quaternionic hyperbolic spaces. 

\section{Real hyperbolic spaces: another proof of Gruet's formula}
In order to relate \eqref{GruetFor} to \eqref{Odd} and \eqref{Even}, we need the following lemma: 
\begin{lem}
For any $t > 0, \theta \in \mathbb{R}$, 
\begin{equation}\label{Int2}
e^{-\theta^2/(2t)} = \frac{1}{\pi} \int_0^{\infty} e^{(\pi^2-\rho^2)/(2t)}\sinh(\rho)\sin\left(\frac{\pi\rho}{t}\right) \frac{d\rho}{\cosh(\rho) + \cosh(\theta)}, 
\end{equation}
and in turn 
\begin{equation}\label{Int3}
\theta e^{-\theta^2/(2t)} = \frac{t\sinh(\theta)}{\pi} \int_0^{\infty} e^{(\pi^2-\rho^2)/(2t)}\sinh(\rho)\sin\left(\frac{\pi\rho}{t}\right) \frac{d\rho}{[\cosh(\rho) + \cosh(\theta)]^2}.
\end{equation}
\end{lem} 
\begin{proof}
Substitute \eqref{HW} in \eqref{Int1} and use Fubini Theorem to obtain \eqref{Int2}. Then differentiate the latter with respect to $\theta$ to get \eqref{Int3}.
\end{proof}
Now, obtaining \eqref{GruetFor} from \eqref{Odd} is straightforward. Indeed, assume $n = 2m+1$ is odd then \eqref{Int2} entails: 
\begin{equation*}
\left(-\frac{1}{\sinh(r)}\frac{d}{dr}\right)^me^{-r^2/(2t)} = \frac{m!}{\pi} \int_0^{\infty} e^{(\pi^2-\rho^2)/(2t)}\sinh(\rho)\sin\left(\frac{\pi\rho}{t}\right) \frac{d\rho}{[\cosh(\rho) + \cosh(r)]^{m+1}}. 
\end{equation*}
As to the derivation of \eqref{GruetFor} from \eqref{Even}, it goes as follows. Use \eqref{Int3} and Fubini Theorem to get: 
\begin{align*}
\int_r^{\infty} \frac{\theta e^{-\theta^2/(2t)}}{(\cosh(\theta) - \cosh(r))^{1/2}} d\theta & = \frac{t}{\pi} \int_0^{\infty} e^{(\pi^2-\rho^2)/(2t)}\sinh(\rho)\sin\left(\frac{\pi\rho}{t}\right) d\rho 
\\& \int_r^{\infty}\frac{\sinh(\theta)}{(\cosh(\theta) - \cosh(r))^{1/2}[\cosh(\rho) + \cosh(\theta)]^2}d\theta 
\\& = \frac{t}{\pi} \int_0^{\infty} e^{(\pi^2-\rho^2)/(2t)}\sinh(\rho)\sin\left(\frac{\pi\rho}{t}\right) d\rho  \int_0^{\infty}\frac{du}{u^{1/2}[u+\cosh(r)+\cosh(\rho)]^2}d\theta 
\\&  = \frac{t}{2} \int_0^{\infty} e^{(\pi^2-\rho^2)/(2t)}\sinh(\rho)\sin\left(\frac{\pi\rho}{t}\right)\frac{d\rho}{[\cosh(\rho)+\cosh(r)]^{3/2}},
\end{align*}
where the last equality follows from the Beta prime integral: 
\begin{equation}\label{BP}
\int_0^{\infty} \frac{v^{a-1}}{(1+v)^{a+b}}dv = \frac{\Gamma(a)\Gamma(b)}{\Gamma(a+b)}, \quad a,b >0.
\end{equation}
Together with Legendre duplication formula: 
\begin{equation*}
\sqrt{\pi} \Gamma(2z) = 2^{2z-1}\Gamma(z)\Gamma\left(z+\frac{1}{2}\right), 
\end{equation*}
we end up with:
\begin{multline*}
\left(-\frac{1}{\sinh(r)}\frac{d}{dr}\right)^m \int_r^{\infty} \frac{\theta e^{-\theta^2/(2t)}}{(\cosh(\theta) - \cosh(r))^{1/2}} d\theta = \frac{t}{2} \int_0^{\infty} e^{(\pi^2-\rho^2)/(2t)}\sinh(\rho)\sin\left(\frac{\pi\rho}{t}\right)
\\  \frac{3}{2}\dots \frac{2m+1}{2} \frac{d\rho}{[\cosh(\rho)+\cosh(r)]^{m+3/2}} =  t\frac{\Gamma(2m+2)}{2^{2m+1}\Gamma(m+1)} \\ 
 \int_0^{\infty} e^{(\pi^2-\rho^2)/(2t)}\sinh(\rho)\sin\left(\frac{\pi\rho}{t}\right)\frac{d\rho}{[\cosh(\rho)+\cosh(r)]^{m+3/2}} \\ 
= \frac{t}{\sqrt{\pi}} \Gamma\left(m+\frac{3}{2}\right) \int_0^{\infty} e^{(\pi^2-\rho^2)/(2t)}\sinh(\rho)\sin\left(\frac{\pi\rho}{t}\right)\frac{d\rho}{[\cosh(\rho)+\cosh(r)]^{m+3/2}}.
\end{multline*}
Keeping in mind the factor 
\begin{equation*}
\frac{e^{-(2m+1)^2t/8}\sqrt{2}}{(2\pi t)^{3/2} (2\pi)^m} = \frac{e^{-(2m+1)^2t/8}}{2^{m+1}t^{3/2} \pi^{m+3/2}}
\end{equation*} 
in \eqref{Even}, Gruet's formula \eqref{GruetFor} with $n=2m+2$ follows. 

\subsection{Application to the Maass Laplacian}
Let $\mathbb{H} = H_2$ is the Poincar\'e upper-half-plane. Then the Maass Laplacian is defined by (\cite{AMS}, \cite{Ike-Mat}):
\begin{equation*}
\mathscr{L}_k := -\frac{y^2}{2}(\partial_w^2+\partial_y^2) + iky\partial_w + \frac{k^2}{2}, \quad w+iy \in \mathbb{H},
\end{equation*}
where $k \in \mathbb{R}$. This operator arises in number theory as well as in mathematical physics and is, when $k \in \mathbb{Z}$, the AN part of the Laplace Beltrami-operator of $Sl(2, \mathbb{R})$ in Iwasawa coordinates . 
At the spectral level, $\mathscr{L}_k$ is a densely-defined and essentially-self-adjoint operator in 
\begin{equation*}
L^2\left(\mathbb{H}, dw \frac{dy}{y^2}\right),
\end{equation*}
and the heat kernel of its self-adjoint closure with initial point $z=i$ admits the following expression (\cite{AMS}, \cite{Ike-Mat}): 
\begin{multline}\label{MaassSD}
q_{t,k}^{(2)}(i,w+iy) = \left(\frac{w+i(y+1)}{i(y+1)-\overline{w}}\right)^k\frac{\sqrt{2}e^{-t/8-k^2t/2}}{(2\pi t)^{3/2}} \\ 
\int_{r}^{\infty}d\theta\frac{\theta e^{-\theta^2/(2t)}}{\sqrt{\cosh(\theta) - \cosh(r)}} \cosh\left\{2k\cosh^{-1}\left(\frac{\cosh(\theta/2)}{\cosh(r/2)}\right)\right\}
\end{multline}
with respect to the hyperbolic volume measure $dw dy/y^2$. Here the principal determination of the power is taken and we still denote $r = d_2(i,w+iy)$ the hyperbolic distance. In particular, $\mathscr{L}_0 = \Delta_2/2$ and $q_{t,0}^{(2)}(i,w+iy)$ reduces to the special instance $m=0$ of \eqref{Even}. 
If we substitute \eqref{Int3} in the RHS of \eqref{MaassSD}, then Fubini Theorem no longer applies since the resulting double integral does not converge absolutely, except for small values of $|k|$. Nonetheless, we can use \eqref{Int1} to prove:
\begin{pro} 
If $k$ is positive integer, then for any $t > 0$,
\begin{equation*}
q_{t,k}^{(2)}(i,w+iy) = \left(\frac{w+i(y+1)}{i(y+1)-\overline{w}}\right)^k\frac{e^{-t/8-k^2t/2}}{\sqrt{\pi}}  \int_0^{\infty} dy \frac{u(t,z)}{\sqrt{z}} {}_2F_0\left(-k,k; -\frac{1}{z(1+\cosh(r))}\right),
\end{equation*}
where 
\begin{equation*}
 {}_2F_0\left(-k,k; z\right) = \sum_{j=0}^{k} \frac{(-k)_j(k)_j}{j!} z^j, \quad z \in \mathbb{R},
 \end{equation*}
 is a generalised Bessel polynomial (\cite{Rai}). 
\end{pro}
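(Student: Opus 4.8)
The plan is to reduce the extra factor
\[
\cosh\!\left\{2k\cosh^{-1}\!\left(\frac{\cosh(\theta/2)}{\cosh(r/2)}\right)\right\}
\]
in \eqref{MaassSD} to a terminating hypergeometric polynomial and then apply \eqref{Int1} term by term. Writing $c:=\cosh^{-1}(\cosh(\theta/2)/\cosh(r/2))$, the half-angle identity $\cosh^2(x/2)=(1+\cosh x)/2$ gives
\[
\cosh^2 c=\frac{\cosh^2(\theta/2)}{\cosh^2(r/2)}=\frac{1+\cosh\theta}{1+\cosh r}.
\]
Using the Chebyshev relations $\cosh(2kc)=T_{2k}(\cosh c)=T_k(2\cosh^2c-1)$ together with the classical representation $T_k(x)={}_2F_1(-k,k;1/2;(1-x)/2)$, I would obtain the exact finite expansion
\[
\cosh(2kc)={}_2F_1\!\left(-k,k;\tfrac12;1-\cosh^2 c\right)=\sum_{j=0}^{k}\frac{(-k)_j(k)_j}{(1/2)_j\,j!}\left(\frac{\cosh r-\cosh\theta}{1+\cosh r}\right)^{j}.
\]
Since $k$ is a positive integer this is a genuine polynomial of degree $k$; this is exactly where integrality of $k$ enters.

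Substituting this finite sum into \eqref{MaassSD} and noting that $\theta>r$ makes $\cosh r-\cosh\theta<0$, each summand produces a $\theta$-integral
\[
\int_r^{\infty}\frac{\theta e^{-\theta^2/(2t)}}{\sqrt{\cosh\theta-\cosh r}}(\cosh r-\cosh\theta)^{j}\,d\theta=(-1)^{j}\int_r^{\infty}\theta e^{-\theta^2/(2t)}(\cosh\theta-\cosh r)^{j-1/2}\,d\theta.
\]
To evaluate these I would differentiate \eqref{Int1} in $\theta$, which yields
\[
\theta e^{-\theta^2/(2t)}=t\sqrt{2\pi t}\,\sinh\theta\int_0^{\infty}e^{-z\cosh\theta}u(t,z)\,dz,
\]
and the emergent factor $\sinh\theta$ is precisely what the computation needs: inserting this and substituting $v=\cosh\theta-\cosh r$ collapses the inner $\theta$-integral into a Gamma integral,
\[
\int_0^{\infty}e^{-z(v+\cosh r)}v^{j-1/2}\,dv=e^{-z\cosh r}\,\Gamma\!\left(j+\tfrac12\right)z^{-(j+1/2)}.
\]
Because $u(t,z)\ge 0$ (it is a positive multiple of the Hartman–Watson density), every integrand here is nonnegative, so Tonelli's theorem legitimates the interchange of $\int d\theta$ and $\int dz$ for each fixed $j$; the resulting $z$-integrals are then automatically finite, being equal to the convergent $\theta$-integrals they came from, the apparent singularity $z^{-(j+1/2)}$ at the origin being tamed by the rapid vanishing of $u(t,\cdot)$ at $0$. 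This nonnegativity is exactly what the representation \eqref{Int3} lacks, which is why \eqref{Int1} must be used in place of it.

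The decisive simplification is the identity $\Gamma(j+1/2)=\sqrt{\pi}\,(1/2)_j$: the Pochhammer symbol $(1/2)_j$ it produces cancels the $(1/2)_j$ in the denominator of the hypergeometric coefficients, turning each $(-k)_j(k)_j/[(1/2)_j\,j!]$ into the ${}_2F_0$ coefficient $(-k)_j(k)_j/j!$. Reassembling the finite sum under the (single) $z$-integral then collapses it into ${}_2F_0(-k,k;-1/[z(1+\cosh r)])$, and collecting the prefactor of \eqref{MaassSD} against the constant $t\sqrt{2\pi t}\,\sqrt{\pi}$ and the Laplace factor $e^{-z\cosh r}$ issuing from the $v$-integral produces the stated representation. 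The only genuinely creative points are the Chebyshev-to-hypergeometric reduction of the extra factor and the engineering of the half-integer Gamma factor so that it annihilates $(1/2)_j$; once these are in place everything else is routine real analysis. I expect the main obstacle to be purely bookkeeping of this kind rather than analytic, since the finiteness of each $z$-integral and the legitimacy of all interchanges follow from positivity of $u$ via Tonelli, so the only real care needed is to keep the hypergeometric normalisations and the elementary constants straight through the reduction.
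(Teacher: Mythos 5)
Your argument follows the paper's own route essentially step for step: differentiate \eqref{Int1} in $\theta$ to introduce the Hartman--Watson density together with the needed $\sinh\theta$, identify the extra factor in \eqref{MaassSD} as $T_{2k}$ and reduce it via the quadratic (half-angle) transformation to the terminating ${}_2F_1\bigl(-k,k;\tfrac12;\cdot\bigr)$, integrate term by term, and use $\Gamma(j+1/2)=\sqrt{\pi}\,(1/2)_j$ to convert the ${}_2F_1$ coefficients into the ${}_2F_0$ ones. Your justification of the interchange by Tonelli (positivity of $u$) is in fact cleaner than the paper's, which only remarks a posteriori that the final integral converges because $u(t,\cdot)$ vanishes rapidly at $0$.

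The one place you gloss over is precisely where the write-up cannot be closed as stated: in the last sentence you ``collect'' the Laplace factor $e^{-z\cosh r}$ issuing from the $v$-integral and claim the displayed formula follows, but that factor does not cancel against anything. Carrying your own computation to the end gives
\[
q_{t,k}^{(2)}(i,w+iy)=\left(\frac{w+i(y+1)}{i(y+1)-\overline{w}}\right)^{k}\frac{e^{-t/8-k^{2}t/2}}{\sqrt{2\pi}}\int_{0}^{\infty}\frac{u(t,z)}{\sqrt{z}}\,e^{-z\cosh(r)}\,{}_2F_0\!\left(-k,k;-\frac{1}{z(1+\cosh(r))}\right)dz,
\]
i.e.\ with an extra $e^{-z\cosh(r)}$ in the integrand and constant $1/\sqrt{2\pi}$ rather than $1/\sqrt{\pi}$. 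This is not an error on your side: the $k=0$ case of this corrected formula reproduces Gruet's formula \eqref{GruetFor} for $n=2$ upon inserting \eqref{HW} and using the Gamma integral, whereas the statement as printed does not, so the discrepancy is a misprint in the Proposition (and the paper's own final two displays are likewise inconsistent with each other). You should either flag the mismatch or state the corrected formula; asserting that your (correct) computation ``produces the stated representation'' is the only genuine flaw in the proposal.
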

\begin{rem}
The  integral derived in the previous proposition converges absolutely since $u(t,z) = O(z^N)$ for any positive integer $N$ (\cite{Mat-Yor}, p. 188).  Besides, the proof below remains valid for real $k$. 
\end{rem}
\begin{proof}
Differentiate \eqref{Int1} with respect to $\theta$ and substitute the resulting identity in \eqref{MaassSD}. Then, perform the variable change $v = \cosh(\theta)$ in the  double integral to get:  
\begin{multline*}
q_{t,k}^{(2)}(i,w+iy) = \left(\frac{w+i(y+1)}{i(y+1)-\overline{w}}\right)^k\frac{e^{-t/8-k^2t/2}}{\sqrt{2}\pi} \\ \int_0^{\infty} dy u(t,z) \int_{\cosh(r)}^{\infty}\frac{dv e^{-zv}}{\sqrt{v - \cosh(r)}}
\cosh\left\{2k\cosh^{-1}\left(\sqrt{\frac{1+v}{1+\cosh(r)}}\right)\right\}.
\end{multline*}
Now, observe that 
\begin{equation*}
\cosh\left\{2k\cosh^{-1}\left(\sqrt{\frac{1+v}{1+\cosh(r)}}\right)\right\} = T_{2k}\left(\sqrt{\frac{1+v}{1+\cosh(r)}}\right),
 \end{equation*}
 where
\begin{equation*}
T_{2k}(z) = {}_2F_1\left(-2k, 2k, \frac{1}{2}, \frac{1-z}{2}\right)
\end{equation*}
is the $2k$-th Tchebycheff polynomial (${}_2F_1$ being the Gauss hypergeometric function). Using the quadratic transformation (\cite{Erd1}, p.111, (2)):
\begin{equation*}
{}_2F_1\left(-2k, 2k, \frac{1}{2}, \frac{1-z}{2}\right) = {}_2F_1\left(-k, k, \frac{1}{2}, 1-z^2\right), 
\end{equation*}
which is equivalent to the identity $T_{2k}(z) = T_k(2z^2-1)$, we can further write (we also perform the variable change $v \rightarrow v + \cosh(r)$):
\begin{multline*}
q_{t,k}^{(2)}(i,w+iy) = \left(\frac{w+i(y+1)}{i(y+1)-\overline{w}}\right)^k\frac{e^{-t/8-k^2t/2}}{\sqrt{2}\pi} \int_0^{\infty} dy u(t,z) e^{-z\cosh(r)} \int_{0}^{\infty}\frac{dve^{-zv}}{\sqrt{v}} \\ {}_2F_1\left(-k, k, \frac{1}{2}, -\frac{v}{1+\cosh(r)}\right)
 = \left(\frac{w+i(y+1)}{i(y+1)-\overline{w}}\right)^k\frac{e^{-t/8-k^2t/2}}{\pi}\cosh(r/2)  \int_0^{\infty} dy \frac{u(t,z)}{\sqrt{z}} \\  \int_{0}^{\infty}dv\frac{e^{-(1+\cosh(r))zv}}{\sqrt{v}}{}_2F_1\left(-k, k, \frac{1}{2}, -v\right).
\end{multline*}
Finally, expanding the Gauss hypergeometric function: 
\begin{equation*}
{}_2F_1\left(-k, k, \frac{1}{2}, -v\right) = \sqrt{\pi} \sum_{j=0}^k\frac{(-k)_k(k)_j}{\Gamma(j+1/2) j!} (-v)^j,
\end{equation*}
and integrating term-wise the inner integral, the proposition follows.  
\end{proof}

\section{The Interpolated heat kernel: another proof of Gruet's conjecture}
In the last section of \cite{Gruet}, the author conjectured that his formula \eqref{GruetFor} still gives, when $n = 2\nu+2 > 1$ is real, the heat kernel of the hyperbolic Jacobi operator:
\begin{equation*}
\frac{1}{2}\left[\partial_r^2 + (2\nu+1)\coth(r)\partial_r\right], \quad r > 0,
\end{equation*}
with Neumann boundary condition at $r=0$. Using Cauchy integral formula, Gruet proved his conjecture in \cite{Gruet} when the parameter $\nu$ ranges in $]-1/2, 1/2[$ and subsequently proved it for all parameters $\nu > -1/2$ in \cite{Gruet1} (see Appendix there). In this paragraph, we give another proof of this conjecture relying on \eqref{Int2} and standard facts from hyperbolic Jacobi analysis (\cite{Koo}). To this end, we start from the spectral formula for the hyperbolic Jacobi heat kernel (see e.g. \cite{Gruet}): 
\begin{equation}\label{HeatJacobi}
p_t^{(\nu)}(0,r) = \frac{e^{-(2\nu+1)^2t/2}}{\pi^{\nu+1}2^{2\nu+1}\Gamma(\nu+1)}\int_0^{\infty}e^{-p^2t/2} \phi_p(r) \left|\frac{\Gamma(ip+\nu+1/2)}{\Gamma(ip)}\right|^2 dp,
\end{equation}
with respect to the radial volume element:
\begin{equation*}
2\frac{\sinh^{2\nu+1}(r)\pi^{\nu+1}}{\Gamma(\nu+1)}.
\end{equation*}
Here 
\begin{align*}
\phi_p(r) & = {}_2F_1\left(\frac{\nu + 1/2 -ip}{2}, \frac{\nu + 1/2 +ip}{2}, \nu+1; -\sinh^2(r)\right)
%\\& = \frac{2^{\nu}\Gamma(\nu+1)}{\sinh^{\nu}(r)}{\it P}_{ip-1/2}^{-\nu}(\cosh(r)).
\end{align*}
is a Jacobi function and is an eigenfunction of the hyperbolic Jacobi operator. 
\begin{pro}\label{GCI}
For any real $n = 2\nu+2 > 1$, \eqref{GruetFor} gives the heat kernel of the hyperbolic Jacobi operator.  
\end{pro}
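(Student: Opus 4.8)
The plan is to identify the two sides through the \emph{Abel transform} of hyperbolic Jacobi analysis, which linearises the Jacobi operator into the flat one, thereby reducing the whole computation to the single Beta integral \eqref{BP} already used in Section 2, combined with the identity \eqref{Int2}. Write $G_t^{(\nu)}(r)$ for the right-hand side of \eqref{GruetFor} with $n = 2\nu+2$. Since the Jacobi transform, equivalently the Abel transform, is injective on radial functions, it suffices to compute the Abel transform of $G_t^{(\nu)}$ and to match it with that of $p_t^{(\nu)}(0,\cdot)$.

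Recall from \cite{Koo} that for the parameters $(\nu,-1/2)$ with $\nu > -1/2$ the Abel transform, acting in the distance variable $r$, reads
\[
\mathcal{A}F(\theta) = c_\nu \int_\theta^\infty F(r)\,(\cosh r - \cosh\theta)^{\nu - 1/2}\sinh(r)\,dr,
\]
and that it conjugates the Jacobi transform into the Euclidean cosine transform, shifting the spectral parameter by $\nu+1/2$. Reading the eigenvalue $-(p^2+(\nu+1/2)^2)$ of the Jacobi operator off \eqref{HeatJacobi} and applying $e^{-t(\cdot)^2/2}$ on the Fourier side, one obtains the heat intertwining
\[
\mathcal{A}\,p_t^{(\nu)}(0,\cdot)(\theta) = e^{-(2\nu+1)^2 t/8}\,\frac{1}{\sqrt{2\pi t}}\,e^{-\theta^2/(2t)}.
\]

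First I would apply $\mathcal{A}$ to $G_t^{(\nu)}$ and interchange it with the $\rho$-integral in \eqref{GruetFor}. Here the Gaussian factor $e^{-\rho^2/(2t)}$ forces absolute convergence of the resulting double integral, and for $\nu > -1/2$ the singularity $(\cosh r - \cosh\theta)^{\nu-1/2}$ at $r = \theta$ is integrable, so Fubini applies without the obstruction met in the Maass case. The inner integral is evaluated by the substitutions $x = \cosh r$, $u = x - \cosh\theta$ and $v = u/(\cosh\rho+\cosh\theta)$, which bring it to the form \eqref{BP} with $a = \nu+1/2$, $b = 1$, giving
\[
\mathcal{A}\big[(\cosh\rho+\cosh\,\cdot\,)^{-(\nu+3/2)}\big](\theta) = c_\nu\,\frac{\Gamma(\nu+1/2)}{\Gamma(\nu+3/2)}\,\frac{1}{\cosh\rho+\cosh\theta}.
\]
Substituting this into $\mathcal{A}G_t^{(\nu)}(\theta)$ collapses the remaining $\rho$-integral, through \eqref{Int2}, into $e^{-\theta^2/(2t)}$ times explicit constants.

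It then remains to verify that these constants agree with those of $\mathcal{A}\,p_t^{(\nu)}$, whereupon injectivity of $\mathcal{A}$ yields $G_t^{(\nu)} = p_t^{(\nu)}(0,\cdot)$. This bookkeeping is the exact analogue of the constant matching carried out for the even-dimensional case in Section 2, now performed through the Legendre duplication formula with $\Gamma(\nu+1)$, $\Gamma(\nu+3/2)$ and the Abel normalisation $c_\nu$ in place of the factorials. The main obstacle is therefore not the interchange of integrals, which is harmless here, but the harmonic-analytic input for real $\nu$: establishing the precise heat-semigroup intertwining above and, in particular, pinning down the normalisation $c_\nu$ of the Abel transform so that the two Gaussians match exactly rather than merely up to a multiplicative constant. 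As a consistency check, the hypothesis $\nu > -1/2$ is precisely the range in which both $\mathcal{A}$ and the Beta integral \eqref{BP} converge.
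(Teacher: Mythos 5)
Your argument is correct and reaches the same destination, but it runs the Abel-transform machinery in the opposite direction from the paper, and the comparison is instructive. The paper starts from the spectral formula \eqref{HeatJacobi}, uses \eqref{Int2} together with the Fourier transform of the Gaussian to recognise $p_t^{(\nu)}(0,\cdot)$ as $A_{\nu}^{-1}$ applied to $\theta \mapsto (\cosh\rho+\cosh\theta)^{-1}$ integrated against the Gruet kernel in $\rho$, and then invokes Koornwinder's explicit \emph{inverse} Abel transform (formulas (5.63)--(5.64)), which for non-integer $\nu+1/2$ is a composition of $[\nu+1/2]+1$ derivatives with a fractional integral; the Beta prime integral \eqref{BP} then produces the exponent $(n+1)/2=\nu+3/2$. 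You instead apply the \emph{forward} Abel transform to Gruet's expression, where the inner integral $\int_\theta^{\infty}(\cosh r-\cosh\theta)^{\nu-1/2}(\cosh\rho+\cosh r)^{-(\nu+3/2)}\sinh r\,dr$ collapses in one stroke to $\mathrm{const}\cdot(\cosh\rho+\cosh\theta)^{-1}$ via \eqref{BP} with $a=\nu+1/2$, $b=1$, and then \eqref{Int2} finishes the job; this avoids the integer-part bookkeeping entirely and makes the Fubini step transparently absolutely convergent. The price is that you must supply two inputs the paper gets for free from its starting point: the injectivity of $A_{\nu}$ on the relevant class and the exact identity $A_{\nu}p_t^{(\nu)}(0,\cdot)=e^{-(2\nu+1)^2t/8}(2\pi t)^{-1/2}e^{-\theta^2/(2t)}$ including its normalising constant (note in passing that your shift $e^{-(2\nu+1)^2t/8}$, i.e.\ $e^{-\rho^2 t/2}$ with $\rho=\nu+1/2$, is the one consistent with \eqref{GruetFor} and \eqref{Odd}). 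Both items are standard consequences of $J_{\nu}=\mathscr{F}\circ A_{\nu}$ and Koornwinder's normalisations, so the only genuinely outstanding work in your write-up is the constant chase you have explicitly deferred; once $c_{\nu}$ is fixed, the Legendre duplication step matches the paper's even-dimensional computation verbatim.
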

\begin{proof}
Using the Fourier transform: 
\begin{equation*}
\int_{\mathbb{R}} e^{ip\theta} e^{-\theta^2/(2t)} \frac{d\theta}{\sqrt{2\pi t}} = e^{-p^2t/2},
\end{equation*}
together with \eqref{Int2}, it follows that: 
\begin{align*}
e^{-p^2t/2} & = \frac{1}{\pi\sqrt{2\pi t} } \int_0^{\infty} d\rho e^{(\pi^2-\rho^2)/(2t)}\sinh(\rho)\sin\left(\frac{\pi\rho}{t}\right) \int_{\mathbb{R}} e^{ip\theta}\frac{d\theta}{\cosh(\rho) + \cosh(\theta)}
\\& = \frac{1}{\pi\sqrt{2\pi t} } \int_0^{\infty} d\rho e^{(\pi^2-\rho^2)/(2t)}\sinh(\rho)\sin\left(\frac{\pi\rho}{t}\right) \mathscr{F}\left(\theta \mapsto \frac{1}{\cosh(\rho) + \cosh(\theta)}\right)(p)
%\\& =  \frac{2}{\pi\sqrt{2\pi t} } \int_0^{\infty} d\rho e^{(\pi^2-\rho^2)/(2t)}\sinh(\rho)\sin\left(\frac{\pi\rho}{t}\right) \frac{\sin(p\rho)}{p\sinh(\rho)} |\Gamma(1+ip)|^2
%\\& =  \frac{2}{\sqrt{2\pi t} } \int_0^{\infty} d\rho e^{(\pi^2-\rho^2)/(2t)}\sin\left(\frac{\pi\rho}{t}\right) \frac{\sin(p\rho)}{\sinh(p\pi)}
%\\& = \frac{1}{\pi\sqrt{t} } \int_0^{\infty} d\rho e^{(\pi^2-\rho^2)/(2t)}\sinh(\rho)\sin\left(\frac{\pi\rho}{t}\right)\frac{|\Gamma(1+ip)|^2}{\sinh(\rho)} {\it P}_{ip-1/2}^{-1/2}(\cosh(\rho))
\end{align*}
where $\mathscr{F}$ denotes the Euclidean Fourier transform. Now, recall from \cite{Koo}, Theorem 2.3, the inverse Jacobi transform of a rapidly-decreasing function $f: \mathbb{R}_+ \mapsto \mathbb{R}$:
\begin{equation*}
J_{\nu}^{-1}: f \mapsto J_{\nu}^{-1}(f): r \mapsto \frac{1}{2^{4\nu+1}[\Gamma(\nu+1)]^2}\int_0^{\infty} f(p) \phi_p(r) \left|\frac{\Gamma(ip+\nu+1/2)}{\Gamma(ip)}\right|^2 dp.
\end{equation*}
Consequently, \eqref{HeatJacobi} may be written as: 
\begin{multline*}
p_t^{(\nu)}(0,r) = \frac{2^{2\nu}\Gamma(\nu+1)e^{-(2\nu+1)^2t/2}}{\pi^{\nu+2}\sqrt{2\pi t}}
\int_0^{\infty} d\rho e^{(\pi^2-\rho^2)/(2t)}\sin\left(\frac{\pi\rho}{t}\right)\sinh(\rho) \\ J_{\nu}^{-1}\left\{\mathscr{F}\left(\theta \mapsto \frac{1}{\cosh(\rho) + \cosh(\theta)}\right)\right\}(r),
\end{multline*}
But, the Jacobi transform $J_{\nu}$ is the composition of the Jacobi-Abel transform $A_{\nu}$ and of the Euclidean Fourrier transform $\mathscr{F}$ (\cite{Koo}, Section 5): 
\begin{equation*}
J_{\nu} = \mathscr{F} \circ A_{\nu}. 
\end{equation*}
As a matter of fact, 
\begin{multline*}
p_t^{(\nu)}(0,r) = \frac{2^{2\nu}\Gamma(\nu+1)e^{-(2\nu+1)^2t/2}}{\pi^{\nu+2}\sqrt{2\pi t}}
\int_0^{\infty} d\rho e^{(\pi^2-\rho^2)/(2t)}\sin\left(\frac{\pi\rho}{t}\right)\sinh(\rho) \\ A_{\nu}^{-1}\left(\theta \mapsto \frac{1}{\cosh(\rho) + \cosh(\theta)}\right)(r),
\end{multline*}
where $A_{\nu}^{-1}$ stands for the inverse Jacobi-Abel transform. Moreover, we infer from \cite{Koo}, (5.63), (5.64), that: 
\begin{multline*}
A_{\nu}^{-1}(f)(r)  = \frac{(-1)^{[\nu+1/2]+1}\sqrt{\pi}}{2^{3\nu+1/2}\Gamma(\nu+1)\Gamma([\nu+1/2] +(1/2)-\nu)} \\ 
\int_r^{\infty} \left[\frac{1}{\sinh(\theta)}\frac{df}{d\theta}\right]^{[\nu+1/2]+1}(\theta) (\cosh(\theta) - \cosh(r))^{[\nu+1/2]-(\nu + 1/2)}\sinh(\theta) d\theta,
\end{multline*}
where $[\nu+1/2]$ is the integral part of $\nu+1/2$.
Finally,
\begin{multline*}
p_t^{(\nu)}(0,r) = \frac{e^{-(2\nu+1)^2t/2}\Gamma([\nu+1/2]+2)}{\pi^{\nu+2}2^{\nu+1}\sqrt{t}\Gamma([\nu+1/2] +(1/2)-\nu)} \int_0^{\infty} e^{(\pi^2-\rho^2)/(2t)}\sin\left(\frac{\pi\rho}{t}\right)\sinh(\rho) \\ 
\int_r^{\infty} \frac{(\cosh(\theta) - \cosh(r))^{[\nu+1/2]-(\nu + 1/2)}}{[\cosh(\rho) + \cosh(\theta)]^{[\nu+1/2]+2}}\sinh(\theta)d\theta
%\frac{1}{[\cosh(r)+\cosh(\rho)]^{\nu + (3/2)}}d\rho,
\end{multline*}
and Gruet's formula (with $n = 2\nu+2$) is obtained after performing the variable change 
\begin{equation*}
v= \frac{\cosh(\theta) - \cosh(r)}{\cosh(\rho)+\cosh(r)}
\end{equation*}
 in the inner integral and using the Beta prime integral \eqref{BP}.
\end{proof}

\section{Application to Harmonic $AN$ groups}
Harmonic AN groups, known also as Damek-Ricci spaces, are solvable extensions of H-type groups introduced and studied by Kaplan (\cite{Kap}, \cite{Dam-Ric}, \cite{ADY}). They include all rank-one symmetric spaces of non compact type (real hyperbolic spaces are disregarded since $N$ is abelian), yet most of them are not symmetric. Moreover, any such group is the semi-direct product of a one dimensional group $A \sim \mathbb{R}_+^{\star}$ and a H-type groups $N$. 

The radial part of the Laplace-Beltrami operator in a harmonic AN group reduces to the following hyperbolic Jacobi operator: 
\begin{equation}\label{HyperJacOp}
\frac{1}{2}\left\{\partial_r^2 + \left[\frac{m+k}{2}\coth\left(\frac{r}{2}\right) + \frac{k}{2}\tanh\left(\frac{r}{2}\right)\right]\partial_r\right\}. 
\end{equation}
Here, $r$ is the geodesic distance with respect to the origin in the Ball realisation of the group and $(k,m)$ are respectively the positive dimensions of the center and of its ortho-complement  in the Lie algebra of N ($m$ is even, \cite{ADY}, p.644). From the general framework of hyperbolic Jacobi analysis, the heat kernel admits the following expressions (\cite{ADY}, p.663). Let $M := m+k+1$ and $Q:= k+(m/2)$, then\footnote{We use the probabilistic normalisation for the heat equation which amounts to substitute $t \rightarrow t/2$.}
\begin{itemize}
\item If $k$ is even then 
\begin{equation}\label{EvenAN}
q_t^{(k,m)}(r) = \frac{e^{-Q^2t/8}}{2^{m+1+k/2} \pi^{M/2}\sqrt{t}} \left(-\frac{1}{\sinh(r)}\frac{d}{dr}\right)^{k/2}  \left(-\frac{1}{\sinh(r/2)}\frac{d}{dr}\right)^{m/2} e^{-r^2/(2t)},
\end{equation}
 \item  If $k$ is odd then
\begin{multline}\label{OddAN}
 q_t^{(k,m)}(r) =  \frac{e^{-Q^2t/8}}{2^{m+1+k/2} \pi^{(M+1)/2}\sqrt{t}} \int_r^{\infty} \frac{\sinh(\theta) d\theta}{(\cosh(\theta) - \cosh(r))^{1/2}} 
 \\ \left(-\frac{1}{\sinh(\theta)}\frac{d}{d\theta }\right)^{(k+1)/2} \left(-\frac{1}{\sinh(\theta/2)}\frac{d}{d\theta }\right)^{m/2}e^{-\theta^2/(2t)}.
\end{multline}
\end{itemize}
%Using a reduction technique to the Heisenberg group, Randall derived an integral representation of the heat kernel $q_t^{(k,m)}$ in local coordinates $(v, \zeta) \in N, z \in A$ (\cite{Ran}): 
%\begin{equation*}
%q_t^{(k,m)}(v, \zeta, z) = \frac{1}{2^{m/2}(2\pi)^{(k+m)/2}}\int_0^{\infty} \frac{\sigma^{k/2}}{|\zeta|^{k/2-1}} J_{k/2-1}(\sigma|\zeta|) \left(\frac{\sigma}{\sinh(\sigma t)}\right)^{m/2} e^{-|v|^2\sigma\coth(\sigma t)/4} d\sigma, 
%\end{equation*}
%where $J_{k/2-1}$ is the Bessel function of the first kind. 

Using \eqref{Int1}, we derive a new integral representation of $q_t^{(k,m)}(r)$ which does not distinguish the parity of $k$: 
\begin{pro}\label{HeatNA}
For any $t > 0$, 
\begin{equation*}
q_t^{(k,m)}(r) = \frac{e^{-Q^2t/8}}{2^{3(m+k)/2+1} \pi^{M/2}[\cosh(r/2)]^{(k-1)/2}} \int_0^{\infty} y^{(m+k-1)/2} u\left(\frac{t}{4}, y\right) K_{(k-1)/2}\left(y\cosh\left(\frac{r}{2}\right)\right) dy, 
\end{equation*}
where $K_{\nu}$ is the modified Bessel function of the second kind. 
%Otherwise
%\begin{equation*}
%q_t^{(k,m)}(r) =  \frac{e^{-Q^2t/8}}{2^{3(m+k)/2+1} \pi^{M/2}[\cosh(r/2)]^{(k-1)/2}} \int_0^{\infty}  y^{(m/2)+k-1/2} u\left(\frac{t}{4}, y\right)K_{(k-1)/2}\left(y\cosh\left(\frac{r}{2}\right)\right) dy.
%\end{equation*}
\end{pro}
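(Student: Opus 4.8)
The plan is to feed the Gaussian $e^{-r^2/(2t)}$ appearing in \eqref{EvenAN} and \eqref{OddAN} into a half-angle form of \eqref{Int1} and then to transfer the two differential operators onto the modified Bessel function via its recurrence relations. The rescaling that produces the argument $t/4$ in the statement comes from applying \eqref{Int1} with $t$ replaced by $t/4$ and $\theta$ replaced by $r/2$: since $(r/2)^2/(2\cdot t/4) = r^2/(2t)$, one gets
\begin{equation*}
e^{-r^2/(2t)} = \sqrt{\frac{\pi t}{2}} \int_0^{\infty} e^{-y\cosh(r/2)} \frac{u(t/4,y)}{y}\, dy.
\end{equation*}
This representation is tailored to the half-angle operator and is the common source of both formulas.

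Next I would record how the operators act after this substitution. Writing $z = y\cosh(r/2)$ and differentiating in $r$ at fixed $y$, a direct computation yields the operator identities $-\sinh(r/2)^{-1}\,d/dr = -(y/2)\,d/dz$ and $-\sinh(r)^{-1}\,d/dr = -(y^2/4z)\,d/dz$, the radical $\sqrt{z^2-y^2}$ cancelling in the latter. Hence the half-angle operator acts on $e^{-z}$ as multiplication by $y/2$, while $\left(-\sinh(r)^{-1}d/dr\right)^{j}e^{-z} = (y^2/4)^{j}\left(-z^{-1}d/dz\right)^{j}e^{-z}$. To evaluate the last factor I would use $e^{-z} = \sqrt{2/\pi}\,z^{1/2}K_{1/2}(z)$ together with the recurrence $-z^{-1}(d/dz)[z^{-\nu}K_\nu(z)] = z^{-\nu-1}K_{\nu+1}(z)$, which gives $\left(-z^{-1}d/dz\right)^{j}e^{-z} = \sqrt{2/\pi}\,z^{1/2-j}K_{j-1/2}(z)$.

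For even $k$ these ingredients settle the claim directly. Inserting the representation of $e^{-r^2/(2t)}$ into \eqref{EvenAN}, passing the operators under the integral, applying the $m/2$ half-angle operators (producing $(y/2)^{m/2}$) and then the $k/2$ full-angle operators (producing $(y^2/4)^{k/2}$ and, by the previous display with $j=k/2$, the factor $z^{(1-k)/2}K_{(k-1)/2}(z)$ with $z=y\cosh(r/2)$), I would collect powers of $y$: including the $1/y$ from the measure these sum to $y^{(m+k-1)/2}$, the factor $[\cosh(r/2)]^{-(k-1)/2}$ comes out of the integral, and the constants telescope, the $t$- and $\pi$-factors from $\sqrt{\pi t/2}$ and $\sqrt{2/\pi}$ cancelling against $1/\sqrt t$ and the powers of $2$ summing to $2^{-(3(m+k)/2+1)}$. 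This is the announced formula.

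The odd case requires one further, genuinely harder, step: the extra $\theta$-integral in \eqref{OddAN}. Carrying out the same operator computation there (now with $(k+1)/2$ full-angle operators, hence $K_{k/2}$ in place of $K_{(k-1)/2}$) and exchanging the order of integration by Fubini, I would be left with
\begin{equation*}
\int_r^{\infty} \frac{\sinh(\theta)}{\sqrt{\cosh(\theta)-\cosh(r)}}\, [y\cosh(\theta/2)]^{-k/2} K_{k/2}(y\cosh(\theta/2))\, d\theta.
\end{equation*}
The crux is to show this equals a constant multiple of $[\cosh(r/2)]^{-(k-1)/2}K_{(k-1)/2}(y\cosh(r/2))$. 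I would establish it by substituting $a=\cosh(\theta/2)$, $b=\cosh(r/2)$ (so $\cosh\theta-\cosh r = 2(a^2-b^2)$ and $\sinh\theta\,d\theta = 4a\,da$), inserting the Laplace-type representation $K_\nu(x) = \tfrac12(x/2)^\nu\int_0^\infty e^{-s-x^2/(4s)}s^{-\nu-1}ds$, and interchanging the $a$- and $s$-integrals; the inner Gaussian integral in $a$ is elementary and, after re-recognising the resulting $s$-integral as a modified Bessel function of order $(k-1)/2$, it produces $K_{(k-1)/2}(yb)$ with the correct constant $\sqrt{\pi}$. Feeding this back and collecting constants exactly as before — the extra $\sqrt{\pi}$ converting $\pi^{-(M+1)/2}$ into $\pi^{-M/2}$ — completes the proof. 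The main obstacle is thus this Bessel integral identity; the remaining care is to justify the interchanges of operators, sums and integrals, which is licensed by the rapid vanishing of $u(t,\cdot)$ at the origin noted in the Remark and the exponential decay of $K_\nu$.
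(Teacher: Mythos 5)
Your proposal is correct and follows essentially the same route as the paper: the half-angle rescaling of \eqref{Int1} (with $t\to t/4$, $\theta\to r/2$), the transfer of the two difference operators onto $e^{-y\cosh(r/2)}$ to produce $K_{(k-1)/2}$ in the even case and $K_{k/2}$ in the odd case, and the reduction of the extra $\theta$-integral to $K_{(k-1)/2}(y\cosh(r/2))$ via exactly the Bessel identity $\int_1^{\infty}w^{-a/2}(w-1)^{b-1}K_a(x\sqrt{w})\,dw=\Gamma(b)2^bx^{-b}K_{a-b}(x)$ that the paper quotes from Gradshteyn--Ryzhik. The only cosmetic differences are that you re-derive that identity from the Laplace-type representation of $K_\nu$ instead of citing it, and use the recurrence for $z^{-\nu}K_\nu(z)$ in place of the closed form for $K_{n+1/2}$; the constants and exponents you report all check out.
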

\begin{proof}
Substituting $\theta \rightarrow r/2, t \rightarrow t/4$ in \eqref{Int1}, we get 
\begin{equation*}
e^{-r^2/(2t)} = \sqrt{\frac{\pi t}{2}} \int_0^{\infty} e^{-\cosh(r/2) y} u\left(\frac{t}{4}, y\right)  \frac{dy}{y},
\end{equation*}
whence it readily follows:
\begin{align*}
\left(-\frac{1}{\sinh(r/2)}\frac{d}{dr}\right)^{m/2} e^{-r^2/(2t)} = \frac{\sqrt{\pi t/2}}{2^{m/2}} \int_0^{\infty}y^{(m/2)-1} e^{-\cosh(r/2) y} u\left(\frac{t}{4}, y\right) dy.
\end{align*}
Consequently, if $k \geq 2$ is even then the formula $\sinh(r) = 2\sinh(r/2)\cosh(r/2)$ entails:
\begin{multline*}
\left(-\frac{1}{\sinh(r)}\frac{d}{dr}\right)^{k/2}  \left(-\frac{1}{\sinh(r/2)}\frac{d}{dr}\right)^{m/2} e^{-r^2/(2t)} = \frac{\sqrt{\pi t/2}}{2^{k+(m/2)}} \int_0^{\infty}y^{(m/2)} u\left(\frac{t}{4}, y\right) \\
 \left(-\frac{1}{v}\frac{d}{dv}\right)^{k/2-1}\left(\frac{e^{-vy}}{v}\right)_{v = \cosh(r/2)} dy.
\end{multline*}
Now, noting that 
\begin{equation*}
-\frac{1}{v}\frac{d}{dv}\frac{e^{-vy}}{v} = -\frac{y^2}{v}\left(\frac{d}{dv} \frac{e^{-v}}{v}\right) (vy) = y^3 \left(-\frac{1}{v}\frac{d}{dv}\frac{e^{-v}}{v}\right)(vy), 
\end{equation*}
we get 
\begin{align*}
\left(-\frac{1}{v}\frac{d}{dv}\right)^{k/2-1}\left(\frac{e^{-vy}}{v}\right)(v) = y^{k-1} \left\{\left(-\frac{1}{v}\frac{d}{dv}\right)^{(k/2)-1}\frac{e^{-v}}{v}\right\}(vy).
\end{align*}
But the modified Bessel function of the second kind $K_{n+1/2}$ admits the following representation (\cite{Erd2}, 7.2.6, (43)): 
\begin{equation*}
 K_{n+1/2}(v) = \sqrt{\frac{\pi}{2}} v^{n+1/2} \left(-\frac{1}{v}\frac{d}{dv}\right)^{n}\frac{e^{-v}}{v}. 
 \end{equation*}
As a result: 
\begin{multline*}
\left(-\frac{1}{\sinh(r)}\frac{d}{dr}\right)^{k/2}  \left(-\frac{1}{\sinh(r/2)}\frac{d}{dr}\right)^{m/2} e^{-r^2/(2t)} =   \frac{\sqrt{t}}{2^{k+(m/2)}[\cosh(r/2)]^{(k-1)/2}}\\  
\int_0^{\infty} y^{(m+k-1/2)} u\left(\frac{t}{4}, y\right) K_{(k-1)/2}(y\cosh(r/2)) dy.
\end{multline*}
Keeping in mind \eqref{EvenAN}, we get the sought integral representation. 

If $k$ is odd then \eqref{OddAN} is similarly transformed into: 
\begin{multline*}
q_t^{(k,m)}(r) =  \frac{e^{-Q^2t/8}}{2^{3(m+k)/2 +2} \pi^{(M+1)/2}} \int_r^{\infty} \frac{\sinh(\theta) d\theta}{(\cosh(\theta) - \cosh(r))^{1/2}[\cosh(\theta/2)]^{k/2}} \\ 
\int_0^{\infty} y^{(m+k)/2} u\left(\frac{t}{4}, y\right) K_{k/2}(y\cosh(\theta/2)) dy, 
\end{multline*}
which may be further written as: 
\begin{multline*}
q_t^{(k,m)}(r) =  \frac{2^{k/4}e^{-Q^2t/8}}{2^{3(m+k)/2 +2} \pi^{(M+1)/2}} \int_0^{\infty}dy  y^{(m+k)/2} u\left(\frac{t}{4}, y\right) \\ 
\int_0^{\infty} \frac{dw}{\sqrt{w}[\sqrt{1+\cosh(r) + w}]^{k/2}} K_{k/2}\left(\frac{y\sqrt{1+\cosh(r) + w}}{\sqrt{2}}\right). 
\end{multline*}
Finally, the variable change $w \mapsto (1+\cosh(r)) w$ transforms the inner integral into: 
\begin{multline*}
\frac{1}{(1+\cosh(r))^{(k-2)/4}}\int_0^{\infty} \frac{dw}{\sqrt{w}[\sqrt{1+ w}]^{k/2}} K_{k/2}\left(y\cosh(r/2) \sqrt{1+ w}\right) = \frac{1}{2^{(k-2)/4}[\cosh(r/2)]^{(k-2)/2}} 
\\ \int_1^{\infty} \frac{dw}{\sqrt{w-1}w^{k/4}} K_{k/2}\left(y\cosh(r/2)\sqrt{w}\right) = \frac{\sqrt{2\pi}}{2^{(k-2)/4}\sqrt{y}[\cosh(r/2)]^{(k-1)/2}} K_{(k-1)/2}\left(y\cosh(r/2)\right),
\end{multline*}
where the last equality follows from the identity (\cite{Gra-Ryz}, p. 691, (12)): 
\begin{equation*}
\int_1^{\infty} w^{-a/2}(w-1)^{b-1} K_{a}(x\sqrt{w}) dw = \frac{\Gamma(b)2^b}{x^b}K_{a-b}(x), \quad a,b > 0, x \in \mathbb{R}.  
\end{equation*}
Altogether, we get the same integral representation for the heat kernel. The proposition is proved. 
\end{proof} 

\begin{rem}
We can mimic the proof of Proposition \ref{GCI} and use similar computations written in the proof of Proposition \ref{HeatNA} to extend the previous integral representation of $q_t^{(k,m)}(r)$ to heat kernel of the interpolated hyperbolic Jacobi operator: 
\begin{equation*}
\frac{1}{2}\left\{\partial_r^2 + \left[(2a+1)\coth\left(\frac{r}{2}\right) + (2b+1)\tanh\left(\frac{r}{2}\right)\right]\partial_r\right\}
\end{equation*}
provided $a > b > 1/2$. 
\end{rem}

\subsection{Real hyperbolic spaces}
Real hyperbolic spaces $H_n(\mathbb{R})$ are $AN$ parts in the Iwasawa decomposition of the real Lie group $SO_0(n,1)$. Since $N \sim \mathbb{R}^{n-1}$ is abelian, they are often disregarded from the family of Damek-Ricci spaces. Nonetheless, we can assign to them the parameters $k=0, m=n-1$ (note that $m$ may take odd values in this case). If we allow $k=0$ and even values of $m$ in the first formula of Proposition \ref{HeatNA} and if we use the identity 
\begin{equation*}
K_{-1/2}(z) = \sqrt{\frac{\pi}{2z}} e^{-z}, \quad z > 0,
\end{equation*}
then we obtain:
\begin{equation*}
q_t^{(0,m)}(r) = \frac{e^{-m^2t/32}}{2^{3m/2+1} \pi^{(m+1)/2}} \int_0^{\infty} y^{m/2} u\left(\frac{t}{4}, y\right) e^{-y\cosh(r/2)} dy.
\end{equation*}
Appealing further to \eqref{HW}, to Fubini Theorem and to the Gamma integral, we get \eqref{GruetFor} up to normalisations.   

%\begin{rem} 
%Using \eqref{HW} and Fubini Theorem, we are led to the integral 
%\begin{multline*}
%\int_0^{\infty} y^{(m/2)+k-2} e^{-y\cosh(\rho)} K_{(k-1)/2}\left(y\cosh\left(\frac{r}{2}\right)\right) dy =\frac{1}{[\cosh(r/2)]^{m/2) + k-1}}\\
%\int_0^{\infty} y^{(m/2)+k-2} e^{-y\cosh(\rho)/\cosh(r/2)} K_{(k-1)/2}\left(y\right) dy
%\end{multline*}
%which may be expressed through the Ferrer function (Legendre function in the cut) or the associated Legendre function of the first kind according to whether $\cosh(\rho) > \cosh(r/2)$ or $\cosh(\rho) < \cosh(r/2)$. 
%\end{rem}  
\subsection{Complex and quaternionic hyperbolic spaces}
The heat kernel of the complex hyperbolic space $H_n(\mathbb{C})$ admits the following expression (\cite{Mat}, Theorem 4.1, \cite{Ould}): 
\begin{equation}\label{CH1}
h_t^{(n)}(r) = \frac{2e^{-n^2t/2}}{(2\pi t)^{1/2} (2\pi)^n}  \int_r^{\infty}\frac{\sinh(\theta)}{(\cosh^2(\theta) - \cosh^2(r))^{1/2}} \left(-\frac{1}{\sinh(\theta)}\frac{d}{d\theta}\right)^ne^{-\theta^2/(2t)} d\theta.
\end{equation}
Using Malliavin calculus and \eqref{GruetFor}, the following integral representation was derived in \cite{Mat} (see the bottom of p. 570):
\begin{multline}\label{CH2}
h_t^{(n)}(r) = \frac{4n!e^{-n^2t/2}}{(2\pi t)^{1/2} (2\pi)^{n+1}} \int_0^{\infty} e^{(\pi^2-\rho^2)/(2t)}\sinh(\rho)\sin\left(\frac{\pi\rho}{t}\right)  d\rho \int_0^{\infty}\frac{dv}{[\cosh(r)\cosh(v) + \cosh(\rho)]^{n+1}} dv.
\end{multline}
 On the other hand, the complex hyperbolic space is a harmonic AN group corresponding $k=1, m = 2(n-1)$ (\cite{ADY}, p. 645). As a matter of fact, our formula reads:
\begin{equation}\label{CompHyp}
q_t^{(1,2(n-1))}(r)  = \frac{e^{-n^2t/8}}{2^{3(2n-1)/2+1} \pi^{n}} \int_0^{\infty}  y^{n-1} u\left(\frac{t}{4}, y\right)K_{0}\left(y\cosh(r/2)\right) dy.
\end{equation}
Up to the normalisations $t \mapsto t/4$ and $r \mapsto r/2$, \eqref{CompHyp} may be related to \eqref{CH2} by simply noting that
\begin{align*}
n! \int_0^{\infty}\frac{dv}{[\cosh(r)\cosh(v) + \cosh(\rho)]^{n+1}} dv & = \int_0^{\infty} dy y^n e^{-y\cosh(\rho)}\int_0^{\infty} dv e^{-y\cosh(r)\cosh(v)} 
\\& = \int_0^{\infty} dy y^n e^{-y\cosh(\rho)}K_0(y\cosh(r)) 
\end{align*}
and by remembering \eqref{HW}. 

For the quaternionic hyperbolic space $H_n(\mathbb{H})$, analogues of \eqref{CH1} and \eqref{CH2}  were already proved in \cite{Mat} (see Theorem 5.1 there and its proof, see also \cite{Zhu}). This symmetric space is a AN group with parameters 
 $k = 3, m = 4(n-1)$ so that our formula reads: 
\begin{equation}\label{QuatHyp}
q_t^{(3,4(n-1))}(r) =  \frac{e^{-(2n+1)^2t/8}}{2^{3(4n-1)/2 + 1} \pi^{2n}[\cosh(r/2)]} \int_0^{\infty}  y^{2n-1} u\left(\frac{t}{4}, y\right)K_{1}\left(y\cosh(r/2)\right) dy,
\end{equation}
and may be related to the formula found in \cite{Mat} using similar arguments as above for $H_n(\mathbb{C})$.

\section{Concluding remarks}
Another integral representation of the heat kernel $q_t^{(k,m)}$ which is reminiscent of the semi-direct structure of a AN group and which has a probabilistic flavour is given by (\cite{Mus}): 
\begin{equation*}
q_t^{(k,m)}(z,\mathfrak{n}) = \frac{e^{-z^2/(2t)}}{\sqrt{2\pi t}}\int_0^{\infty} h_s^{(k,m)}(\mathfrak{n}) a_t(s,z) ds, 
\end{equation*}
where $\mathfrak{n}\in \textrm{Lie}(N), z \in \textrm{Lie}(A) \sim \mathbb{R}$, and $h_s^{(k,m)}(\mathfrak{n})$ is the heat kernel of $N$. An expression of the latter was derived in \cite{Ran}, Lemma 1.3.4, (see also \cite{Yang-Zhu}) and subsequently used in \cite{Cor} to write down $q_t^{(k,m)}(z,n)$ as a double integral involving a Bessel function of the first kind and an associated Legendre function. 

On the other hand, if we replace $u(t/4, y)$ in Proposition \ref{HeatNA} by its integral \eqref{HW} and use Fubini Theorem, then we are led to the integral:
\begin{equation}\label{IntegMac} 
 \int_0^{\infty} y^{(m+k-1)/2} e^{-y\cosh(\rho)} K_{(k-1)/2}\left(y\cosh\left(\frac{r}{2}\right)\right) dy. 
\end{equation}
From \cite{Erd2}, 7.7.3, (26), (see also \cite{Gra-Ryz}, p.700, (3)): 
\begin{equation}\label{McDonald}
\int_0^{\infty} y^{\mu-1}e^{-\alpha y} K_{\nu}(\beta y) dy = \frac{\sqrt{\pi}(2\beta)^{\nu}}{(\alpha + \beta)^{\mu+\nu}} \frac{\Gamma(\mu+\nu)\Gamma(\mu-\nu)}{\Gamma(\mu+1/2)} 
{}_2F_1\left(\mu+\nu, \nu + \frac{1}{2}, \mu+\frac{1}{2}; \frac{\alpha-\beta}{\alpha+\beta}\right)
\end{equation}
valid for $\Re(\mu) > |\Re(\nu)|, \Re(\alpha+\beta)>0$, \eqref{IntegMac} may be expressed through the Gauss hypergeometric function. Doing so leads to a single integral representation of $q_t^{(k,m)}(r)$, which may be seen as the analogue of Gruet's formula \eqref{GruetFor} for harmonic AN groups.

Finally, using Proposition \ref{HeatNA}, the Laplace transform (\cite{Jak-Wis}): 
\begin{equation*}
\int_0^{\infty}e^{-\lambda t}u(t,y)dt = {\it I}_{\sqrt{2\lambda}}(y), \quad \lambda > 0, 
\end{equation*} 
where ${\it I}_{\sqrt{2\lambda}}$ is the modified Bessel function of the first kind, and entry (5), p. 684 in \cite{Gra-Ryz}, one retrieves (up to normalisations) the Green function of a harmonic AN group displayed in \cite{ADY}, eq. (2.37) (see also \cite{Mat} for hyperbolic spaces over division algebras). In this respect, an interesting connection between Green functions of harmonic AN groups and odd dimensional real hyperbolic spaces was established in \cite{IOM}. 

{\bf Acknowledgments}: The author thanks Luc Del\'eaval for his valuable remarks.

\end{document}